\begin{document}

\newtheorem{theorem}[subsection]{Theorem}
\newtheorem{proposition}[subsection]{Proposition}
\newtheorem{lemma}[subsection]{Lemma}
\newtheorem{corollary}[subsection]{Corollary}
\newtheorem{conjecture}[subsection]{Conjecture}
\newtheorem{prop}[subsection]{Proposition}
\numberwithin{equation}{section}
\newcommand{\mr}{\ensuremath{\mathbb R}}
\newcommand{\dif}{\mathrm{d}}
\newcommand{\intz}{\mathbb{Z}}
\newcommand{\ratq}{\mathbb{Q}}
\newcommand{\natn}{\mathbb{N}}
\newcommand{\comc}{\mathbb{C}}
\newcommand{\rear}{\mathbb{R}}
\newcommand{\prip}{\mathbb{P}}
\newcommand{\uph}{\mathbb{H}}
\newcommand{\fief}{\mathbb{F}}
\newcommand{\majorarc}{\mathfrak{M}}
\newcommand{\minorarc}{\mathfrak{m}}
\newcommand{\sings}{\mathfrak{S}}
\newcommand{\fA}{\ensuremath{\mathfrak A}}
\newcommand{\mn}{\ensuremath{\mathbb N}}
\newcommand{\mq}{\ensuremath{\mathbb Q}}
\newcommand{\half}{\tfrac{1}{2}}
\newcommand{\f}{f\times \chi}
\newcommand{\summ}{\mathop{{\sum}^{\star}}}
\newcommand{\chiq}{\chi \bmod q}
\newcommand{\chidb}{\chi \bmod db}
\newcommand{\chid}{\chi \bmod d}
\newcommand{\sym}{\text{sym}^2}
\newcommand{\hhalf}{\tfrac{1}{2}}
\newcommand{\sumstar}{\sideset{}{^*}\sum}
\newcommand{\sumprime}{\sideset{}{'}\sum}
\newcommand{\sumprimeprime}{\sideset{}{''}\sum}
\newcommand{\shortmod}{\ensuremath{\negthickspace \negthickspace \negthickspace \pmod}}
\newcommand{\V}{V\left(\frac{nm}{q^2}\right)}
\newcommand{\sumi}{\mathop{{\sum}^{\dagger}}}
\newcommand{\mz}{\ensuremath{\mathbb Z}}
\newcommand{\leg}[2]{\left(\frac{#1}{#2}\right)}
\newcommand{\muK}{\mu_{\omega}}

\title{Primes $p \equiv 1 \bmod{d}$ and $a^{(p-1)/d} \equiv 1 \bmod{p}$}
\date{\today}
\author{Peng Gao and Liangyi Zhao}
\maketitle

\begin{abstract}
Suppose that $d \in \{ 2, 3, 4, 6 \}$ and $a \in \intz$ with $a\neq -1$ and $a$ is not square.  Let $P_{(a,d)}$ be the number of primes $p$ not exceeding $x$ such that $p \equiv 1 \pmod{d}$ and $a^{(p-1)/d} \equiv 1 \pmod{p}$.  In this paper, we study the mean value of $P_{(a,d)}$.
\end{abstract}

\noindent {\bf Mathematics Subject Classification (2010)}: 11A07 \newline

\section{Introduction}

Let $\prip$ denote the set of prime numbers, $d>0$ be a squarefree integer and $a \in \intz$ with $a \neq -1$ and $a$ is not a square.  Set
\[ \mathcal{P}_{(a,d)} = \{ p \in \prip : p \equiv 1 \pmod{d} \; \;  \; \mbox{and} \; \; \; a^{(p-1)/d} \equiv 1 \pmod{p} \}  \]
and
\[ P_{a,d} (x) = \# \{ p \in \mathcal{P}_{(a,d)} : p \leq x \} . \]
It has long been known and studied \cites{murty, Hoo} that the estimation of $P_{(a,d)}$ is connected with Artin's conjecture on primitive roots, which asserts that the number of primes not exceeding $x$ and for which $a$ is a primitive root is asymptotically $c \pi(x)$ where $c$ is some explicitly defined constant whose choice depends on the value of $a$. \newline

Assuming that truth of the Riemann hypothesis for Dedekind zeta function for the number field $\ratq[\sqrt[d]{a}, \sqrt[d]{1}]$, C. Hooley \cite{Hoo} proved that
\begin{equation} \label{hoobound}
 P_{(a,d)}(x) = \frac{\varepsilon(d)}{d\varphi(d)} \mathrm{li}(x) + O \left( \sqrt{x} \log (dx) \right) .
 \end{equation}
where $\varepsilon(d)$ is defined in the following way.  Write $a=lm^2$ where $l$ is square free.  $\varepsilon(d)=2$ if $l \equiv 1 \pmod{4}$ and $2l |d$ and $\varepsilon(d)=1$ otherwise.  Using \eqref{hoobound}, Hooley was able to prove Artin's conjecture conditionally.   For a survey of this conjecture, we refer the reader to \cite{murty}. \newline

S. Li \cite{Li} studied $P_{(a,d)}(x)$ on average and proved that
\begin{equation} \label{Libound}
\frac{1}{y} \sum_{2 \leq a \leq y} P_{(a,d)}(x) = \frac{\pi(x;1,d)}{d} +O \left( \frac{x E(x,y)}{\varphi(d) \log x} \right) ,
\end{equation}
where
\[ E(x,y) = y^{-1/21} \; \mbox{if} \; y \leq x^{2/3} \quad \mbox{and} \quad E(x,y) = x^{-1/6} \log x \; \mbox{if} \; y > x^{2/3} . \]
In this paper, we improve the bound \eqref{Libound} for $d=2$, 3, 4 and 6. \newline

In the case $d=2$, we prove the following result.

\begin{theorem} \label{resd2}
We have
\[ \frac{1}{y} \sum_{2 \leq a \leq y} P_{(a,2)}(x) - \frac{\pi(x)}{2} \ll \displaystyle{ \begin{cases}
x \sqrt{\frac{  \log x}{y}}, & \mbox{if} \; \frac{x}{\log x} \geq y , \\ \\
\sqrt{x} \log x , & \mbox{if} \; \frac{x}{\log x} < y .
\end{cases}   } \]
\end{theorem}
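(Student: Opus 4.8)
The plan is to reduce the average to a double character sum and then to estimate it by elementary means when $y$ is large and by a large sieve when $y$ is small. Since $p\equiv 1\pmod 2$ just means $p$ is odd, Euler's criterion shows that for an odd prime $p$ the condition $a^{(p-1)/2}\equiv 1\pmod p$ is equivalent to $\leg{a}{p}=1$. Hence, writing the number of $a$ in an interval with $\leg{a}{p}=1$ as $\tfrac12\big(1+\leg{a}{p}\big)$ summed over $a$, up to a correction counting the $a$ divisible by $p$, summing over the odd primes $p\le x$ and interchanging the order of summation gives
\[
\frac1y\sum_{2\le a\le y}P_{(a,2)}(x)=\frac{\pi(x)}{2}+\frac{1}{2y}S+O\Big(\frac{\pi(x)}{y}+\log\log x\Big),\qquad S:=\sum_{3\le p\le x}\ \sum_{2\le a\le y}\leg{a}{p},
\]
where the two error terms arise from the rounding of the interval length and from the correction counting $a$ divisible by $p$ (the latter being $\ll\sum_{p\le x}p^{-1}\ll\log\log x$); both are comfortably smaller than the two claimed bounds. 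Everything therefore reduces to proving $|S|\ll x\sqrt{y\log x}$ when $y\le x/\log x$ and $|S|\ll y\sqrt x\,\log x$ when $y>x/\log x$, and one checks that these two bounds agree at the threshold $y=x/\log x$.

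The range $y>x/\log x$ is handled termwise. For each odd prime $p$ the inner sum $\sum_{2\le a\le y}\leg{a}{p}$ is an incomplete sum of the nonprincipal real character $\leg{\cdot}{p}$, so the P\'olya--Vinogradov inequality yields $\big|\sum_{2\le a\le y}\leg{a}{p}\big|\ll \sqrt{p}\,\log p$. Summing over $p\le x$ gives $|S|\ll \sum_{p\le x}\sqrt p\,\log p\ll x^{3/2}$, and since $y>x/\log x$ this is $\ll y\sqrt x\,\log x$, as required.

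For the complementary range $y\le x/\log x$ I would instead exploit cancellation on average over the modulus $p$. By Cauchy--Schwarz,
\[
|S|^2\le \pi(x)\sum_{3\le p\le x}\Big|\sum_{2\le a\le y}\leg{a}{p}\Big|^2\le \pi(x)\sum_{\substack{n\le x\\ n\ \mathrm{squarefree}}}\Big|\sum_{2\le a\le y}\leg{a}{n}\Big|^2,
\]
the last step using the nonnegativity of the summand to pass from prime moduli to all squarefree moduli, where $\leg{a}{n}$ now denotes the Jacobi symbol. A large sieve inequality for real characters then bounds the second moment by $\ll (x+y)\,y\,(\log x)^{2}$; since $y\le x$ this is $\ll xy(\log x)^2$, and combining it with $\pi(x)\ll x/\log x$ gives $|S|^2\ll x^2 y\log x$, that is $|S|\ll x\sqrt{y\log x}$, which is exactly the desired bound.

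The main obstacle is precisely this second-moment estimate. Expanding the square and interchanging summation, $\sum_{n}^{\flat}\big|\sum_{a\le y}\leg{a}{n}\big|^2=\sum_{a_1,a_2\le y}\sum_{n\le x}^{\flat}\leg{a_1a_2}{n}$; the terms with $a_1a_2$ a perfect square form a diagonal contributing $\ll x\,y\log y\ll xy\log x$, which is within budget, but in the remaining terms $\leg{a_1a_2}{\cdot}$ is a nonprincipal character, and a termwise P\'olya--Vinogradov bound (in either the argument or the modulus) is far too lossy. Extracting sufficient cancellation from these off-diagonal terms is exactly what forces genuine large-sieve input for quadratic characters. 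The delicate point is that the margin between the large-sieve output and the target is only logarithmic, so the $(xy)^{\varepsilon}$ loss of the general quadratic large sieve cannot be afforded; one needs a version with only a power-of-logarithm loss, obtained by treating the perfect-square diagonal separately and bounding the off-diagonal via the averaged cancellation of $\leg{a_1a_2}{n}$ over the modulus $n$.
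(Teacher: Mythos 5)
Your proposal is correct and follows essentially the same route as the paper: the same reduction via Euler's criterion, the P\'olya--Vinogradov bound termwise for large $y$, and Cauchy--Schwarz plus a second-moment estimate for quadratic character sums for small $y$. The one ingredient you flag as the ``main obstacle'' --- a mean value theorem for real character sums with only power-of-logarithm loss, of the shape $\sum_{n}\bigl|\sum_{a\le y}\leg{a}{n}\bigr|^2\ll xy\log^2 x$ --- is exactly Jutila's Lemma 5, which is what the paper invokes, so no new argument is needed there.
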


Additionally, for $d=3$. $4$ and $6$, we have the following.

\begin{theorem} \label{cubquarsex}
Let $\varepsilon >0$ be given.  If $d \in \{ 3, 4, 6\}$, then
\[ \frac{1}{y} \sum_{2 \leq a \leq y} P_{(a,d)}(x) - \frac{\pi(x;1,d)}{d} \ll (xy)^{\varepsilon} E(x,y), \]
where $E(x,y)$ is defined piece-wise in the following way \newline
\[
E(x,y) = \displaystyle{ \begin{cases}
 x^{1/2}, & \mbox{if} \; x \leq y^{3/5} , \\
x^{4/3}y^{-1/2} , & \mbox{if} \; y^{3/5} < x \leq y^{6/7}, \\
x^{3/4}, & \mbox{if} \; y^{6/7} < x \leq y^{6/5}, \\
x^{7/6}y^{-1/2}, & \mbox{if} \; y^{6/5} < x \leq y^{3/2} , \\
x^{5/6}, & \mbox{if} \; y^{3/2} < x \leq y^{9/5} , \\
x^{10/9}y^{-1/2}  , & \mbox{if} \; y^{9/5} < x \leq y^{108/55} , \\
x^{1/2}y^{7/10} , & \mbox{if} \; y^{108/55} < x \leq y^{11/5} , \\
x^{2/3}y^{1/3}  , & \mbox{if} \; y^{11/5} < x \leq y^{5/2} , \\
xy^{-1/2}  , & \mbox{if} \; y^{5/2} < x .
\end{cases}   }
\]
\end{theorem}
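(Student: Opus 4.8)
The plan is to begin by converting the congruence condition into character sums. For a prime $p\equiv1\pmod d$ the condition $a^{(p-1)/d}\equiv1\pmod p$ is exactly the assertion that $a$ is a nonzero $d$-th power residue modulo $p$, so writing $\chi_p$ for a character of order $d$ modulo $p$ one has, for $(a,p)=1$,
\[ \mathbf 1_{a^{(p-1)/d}\equiv 1}=\frac1d\sum_{j=0}^{d-1}\chi_p^{\,j}(a). \]
Summing over $2\le a\le y$ and over primes $p\le x$ with $p\equiv1\pmod d$, the term $j=0$ produces $\tfrac1d\#\{2\le a\le y:(a,p)=1\}$, and after summing over $p$ and dividing by $y$ this yields the main term $\tfrac{\pi(x;1,d)}{d}$ together with a secondary contribution of size $\tfrac1d\sum_{p\le x,\,p\equiv1(d)}\tfrac1p\ll(xy)^{\varepsilon}$ and $O(1)$-per-prime boundary terms, all of which are harmless. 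Thus the theorem reduces to bounding, for each $j=1,\dots,d-1$, the double character sum
\[ T=\sum_{\substack{p\le x\\ p\equiv1\,(d)}}\ \sum_{2\le a\le y}\chi_p^{\,j}(a), \]
where $\chi_p^{\,j}$ is a residue character of some order $k\mid d$ with $k>1$, i.e. $k\in\{2,3,4,6\}$. The quadratic contributions are treated as in the proof of Theorem~\ref{resd2} and are never dominant, so the binding cases are the cubic, quartic and sextic characters.

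Next I would exploit reciprocity to make $T$ accessible from either variable. Working in $\intz[i]$ for $d=4$ and in $\intz[\omega]$ (with $\omega=e^{2\pi i/3}$) for $d=3,6$, the $k$-th power reciprocity law relates $\left(\tfrac ap\right)_k$ to a symbol with $p$ in the denominator, so that the sum over $a$ of a character modulo $p$ can be traded for a sum over $p$ of a character whose modulus is governed by $a$, up to controllable factors coming from the units and the ramified primes. The values of $a$ that are perfect $k$-th powers become principal after the flip; their contribution is a secondary main term which one checks is absorbed into $(xy)^{\varepsilon}E(x,y)$, while the remaining $a$ contribute genuinely oscillating characters in $p$.

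The estimation then proceeds by playing the two orientations against one another. Applying Cauchy--Schwarz and the large sieve for order-$k$ residue characters with the primes on the outside gives, where the diagonal dominates (small $y$), the bound $T/y\ll(xy)^{\varepsilon}xy^{-1/2}$; applying Cauchy--Schwarz and the large sieve after flipping, with the moduli running over $a\le y$, gives in the complementary range (small $x$) the bound $T/y\ll(xy)^{\varepsilon}x^{1/2}$. These are the two extreme cases $E=xy^{-1/2}$ and $E=x^{1/2}$. The intermediate bounds come from an iterative refinement: one alternately applies reciprocity and completes (Poisson summation) the resulting incomplete sum, in the spirit of a $q$-analogue of the van der Corput process, coupling the outcome at each depth $k$ with the large sieve. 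This produces the two families $x^{(2k-1)/(2k)}$ and $x^{(3k+1)/(3k)}y^{-1/2}$ for $k=1,2,3$, while the very large $x$ regime $y^{108/55}<x\le y^{5/2}$, where the sum over $p$ is long compared with the conductor $\asymp y$, is treated by completing in $p$ and estimating the dual sums built from the Gauss sums of the residue characters, giving the remaining bounds $x^{1/2}y^{7/10}$ and $x^{2/3}y^{1/3}$. Taking in each range of $x$ against powers of $y$ the smallest available estimate assembles the piecewise $E(x,y)$, the breakpoints $y^{3/5},y^{6/7},\dots,y^{5/2}$ being exactly the crossover points of these power laws.

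The main obstacle is this third step. The decisive inputs are large-sieve inequalities for cubic, quartic and sextic residue characters with good uniformity in both the modulus and the length; unlike the quadratic case these carry an extra cross term (of shape $(QN)^{2/3+\varepsilon}$ or similar), reflecting the absence of a simple multiplicativity for the higher Gauss sums, and it is this term, propagated through the iteration, that is responsible for the nontrivial middle-range exponents. The bookkeeping is delicate: at each reciprocity flip one must separate and estimate the principal (perfect $k$-th power) part, control the units, conductors and ramification introduced in $\intz[i]$ and $\intz[\omega]$, and verify that every secondary main term stays below $(xy)^{\varepsilon}E(x,y)$. Ensuring that the iterated completion yields precisely the stated exponents uniformly across all nine ranges, and that the sextic case (which mixes the cubic and quadratic behaviour) loses nothing, is where the real work lies.
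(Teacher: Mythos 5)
Your opening reduction is exactly the paper's: orthogonality of the order-$d$ characters mod $p$ isolates the main term $\pi(x;1,d)/d$ plus harmless errors, and the problem becomes bounding $\sum_{p\le x,\,p\equiv 1\,(d)}\sum_{a\le y}\chi(a)$ over the nontrivial characters $\chi \bmod p$ with $\chi^d=\chi_0$; your observation that the quadratic part is never dominant is also correct. But the heart of the theorem --- the nine intermediate exponents --- is not established in your write-up. The paper gets them in two short moves: it writes $a=l^2m$ with $m$ squarefree (a reduction you omit, and which is needed because the relevant large sieve is stated only for squarefree $m$), applies Cauchy--Schwarz in $p$ and $\chi$, and then invokes, as a black box, the known large sieve inequalities for cubic, quartic and sextic residue characters (Heath-Brown for the quadratic part, Baier--Young for $k=3,6$, and the Gao--Zhao/BGL improvement for $k=4$). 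That inequality already carries the minimum of four bounds $Q^{5/3}+M$, $Q^{4/3}+Q^{1/2}M$, $Q^{11/9}+Q^{2/3}M$, $Q+Q^{1/3}M^{5/3}+M^{12/5}$, and the nine ranges of $E(x,y)$ fall out mechanically by taking $Q\asymp x$, $M\asymp y$ and optimizing (the table on p.~897 of Baier--Young). No reciprocity flip, no iteration, and no secondary main terms from $k$-th powers ever appear at the level of the application.

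What you propose instead --- alternately applying $k$-th power reciprocity in $\intz[i]$ or $\intz[\omega]$ and Poisson completion in a van der Corput-style iteration to manufacture the families $x^{(2k-1)/(2k)}$ and $x^{(3k+1)/(3k)}y^{-1/2}$ --- is in essence a sketch of how one would \emph{prove} those large sieve inequalities, not of how one uses them, and as written it is not checkable: you do not specify the iteration, the treatment of the principal contributions after each flip, or why the exponents come out as claimed, and you concede yourself that this is ``where the real work lies.'' That concession marks a genuine gap. You have correctly reverse-engineered the shape of the answer, but the decisive analytic input is asserted rather than either proved or cited; replacing your third step by a single application of Cauchy--Schwarz (after the $a=l^2m$ reduction) together with a citation of the existing fixed-order large sieve results would close the gap and reproduce the paper's argument.
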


Note that the $O$-terms in Theorems~\ref{resd2} and \ref{cubquarsex} are smaller than that in \eqref{Libound} in all $xy$-ranges.  Finally, we shall prove the following smoothed version of Theorem~\ref{resd2}, assuming the truth of the generalized Riemann hypothesis.

\begin{theorem} \label{resd2smooth}
Let $x$, $Y>1$, $U= x^{1/8}Y^{1/4}$ and 
\[ D(Y) = \{ d : Y \leq d \leq 2Y, d \; \mbox{odd and square-free} \}  \]
and $\Phi_Y(t)$ be a non-negative smooth function supported on the interval $(1,2)$ satisfying the condition $\Phi_Y(t) = 1$ for $t \in (1+1/U, 2-1/U)$ and
\begin{equation} \label{Phiderivbound}
 \Phi_Y^{(j)}(t) \ll_j U^j 
 \end{equation}
for all integers $j \geq 0$.  Assuming the truth of the generalized Riemann hypothesis, we have
\begin{equation} \label{resd2smootheq}
\frac{1}{\# D(Y)} \sum_{(a,2)=1} \mu^2(a) P_{(8a,2)} (x) \Phi_Y \left( \frac{a}{Y} \right)  = \frac{\pi(x)}{2} + O \left( \log \log x  + \left( \frac{x^{7/8}}{Y^{1/4}}+\frac{x}{Y^{1/2}} \right) \log (xY)  \right)  .
\end{equation}
\end{theorem}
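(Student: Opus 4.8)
\emph{Reduction to a character sum.} The plan is to detect the condition $(8a)^{(p-1)/2}\equiv 1\pmod p$ by Euler's criterion and to reduce everything to a character sum that can be averaged over $a$. For odd $a$ and odd $p$, Euler's criterion gives $P_{(8a,2)}(x)=\#\{p\le x:\ p\ \text{odd},\ \leg{8a}{p}=1\}$, so writing the indicator of $\leg{8a}{p}=1$ as $\tfrac12\big(1+\leg{8a}{p}\big)$ (valid when $p\nmid a$, while $\leg{8a}{p}=0$ otherwise) yields
\[
P_{(8a,2)}(x)=\tfrac12\,\pi_a(x)+\tfrac12\sum_{p\le x}\leg{8a}{p},\qquad \pi_a(x):=\#\{p\le x:\ p\ \text{odd},\ p\nmid a\}.
\]
Since $a$ is odd and squarefree, $8a$ is a fundamental discriminant, so $p\mapsto\leg{8a}{p}$ is the primitive real character $\chi_{8a}\bmod 8a$; in particular $\chi_{8a}(2)=0$ and $\chi_{8a}(p)=0$ for $p\mid a$, so the $p$-sum is genuinely $\sum_{p\le x}\chi_{8a}(p)$.

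\emph{The main term.} I would insert $\pi_a(x)=\pi(x)-1-\omega_{\le x}(a)$, where $\omega_{\le x}(a)$ counts the primes $p\le x$ dividing $a$. The weighted count $\sum_{(a,2)=1}\mu^2(a)\Phi_Y(a/Y)$ differs from $\#D(Y)$ only through the two transition intervals of $\Phi_Y$, each of length $\ll Y/U$, so it equals $\#D(Y)\big(1+O(1/U)\big)$; and $\frac{1}{\#D(Y)}\sum_{(a,2)=1}\mu^2(a)\Phi_Y(a/Y)\,\omega_{\le x}(a)\ll\log\log x$ by interchanging the order of summation and using $\sum_{p\le \min(x,2Y)}1/p\ll\log\log x$. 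Together these give $\frac{1}{2\#D(Y)}\sum_a\mu^2(a)\Phi_Y(a/Y)\pi_a(x)=\frac{\pi(x)}2+O\!\big(\log\log x+\pi(x)/U\big)$, and $\pi(x)/U\ll x/U=x^{7/8}/Y^{1/4}$ is within the stated error.

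\emph{The averaged character sum.} Everything then rests on bounding $\mathcal C:=\frac{1}{2\#D(Y)}\sum_{(a,2)=1}\mu^2(a)\Phi_Y(a/Y)\sum_{p\le x}\chi_{8a}(p)$, and here I would split according to the size of $Y$. When $Y\le x^{3/2}$ I invoke GRH for each $L(s,\chi_{8a})$: together with partial summation this gives $\sum_{p\le x}\chi_{8a}(p)\ll\sqrt x\,\log(xY)$ uniformly for $a\le 2Y$, so summing absolute values yields $\mathcal C\ll\sqrt x\,\log(xY)$, and since $x/U\ge x^{1/2}$ throughout this range this is $\ll (x/U+x/\sqrt Y)\log(xY)$. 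When $Y>x^{3/2}$ I argue unconditionally: writing $\chi_{8a}(p)=\leg2p\leg ap$ and removing the squarefree condition via $\mu^2(a)=\sum_{\ell^2\mid a}\mu(\ell)$, the inner sum over $a$ becomes, for each odd $\ell$ with $p\nmid\ell$, a smooth sum $\sum_{m\ \mathrm{odd}}\leg mp\,\Phi_Y(\ell^2 m/Y)$ of a character modulo $p$. Completing this sum modulo $2p$ and applying Poisson summation, the derivative bounds \eqref{Phiderivbound} force it to be negligible unless $p\gg Y/(\ell^2 U)$, and otherwise it is $\ll\min(\sqrt p\log p,\ Y/\ell^2)$; summing this estimate over $\ell\le\sqrt{2Y}$ and then over $p\le x$ produces a bound $\ll x^{5/4}Y^{1/2}(\log x)^{-1/2}$ for $\sum_{p\le x}\leg2p\sum_a(\cdots)$, whence after dividing by $\#D(Y)\asymp Y$ we get $\mathcal C\ll x^{5/4}Y^{-1/2}(\log x)^{-1/2}\ll (x/U)\log(xY)$ in this range.

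\emph{The main obstacle.} The hard part is exactly this large-conductor estimate: one must carry out the smoothed Poisson summation uniformly in $\ell$ and $p$, correctly handle the odd-support condition on $a$ (hence the completion modulo $2p$ and the attendant Gauss sum, of size $\sqrt p$), and keep track of the two truncations — $p\gg Y/(\ell^2 U)$ coming from the smoothness of $\Phi_Y$, and $\ell\le\sqrt{2Y}$ coming from its support. It is the smoothness of $\Phi_Y$, quantified by $U=x^{1/8}Y^{1/4}$, that annihilates the contribution of small primes and makes the unconditional bound beat $\sqrt x$ precisely once $Y>x^{3/2}$, which is exactly the range left uncovered by the GRH step. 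Collecting the three contributions then gives the claimed expansion $\frac{\pi(x)}2+O\big(\log\log x+(x^{7/8}Y^{-1/4}+xY^{-1/2})\log(xY)\big)$.
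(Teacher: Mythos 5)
Your argument is correct, but it takes a genuinely different route from the paper's at the decisive step, namely the estimation of the averaged character sum $\mathcal C$. The paper runs a single argument valid for all $Y$: it writes $\mu^2(a)=M_z(a)+R_z(a)$ with a truncation parameter $z$, bounds the tail $R_z$ by applying the GRH estimate $\sum_{p\le x}\leg{8l^2m}{p}\ll x^{1/2}\log(l^2mx)$, and applies Poisson summation in $a$ (Soundararajan's evaluation of the Gauss-type sums $G_m(p)$) to the $M_z$-part, with the square frequencies $m=\Box$ producing the $xY^{-1/2}$ term of the error and the non-square frequencies handled by GRH again via partial summation; finally $z=Y^{1/2}U^{-1}x^{-1/4}$ is optimized. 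You instead split at $Y=x^{3/2}$: for $Y\le x^{3/2}$ you apply GRH termwise to each primitive character $\chi_{8a}$ and sum absolute values over $a$, which suffices because $\sqrt x\le x^{7/8}Y^{-1/4}$ exactly in that range; for $Y>x^{3/2}$ you work unconditionally, expanding $\mu^2$ by the untruncated M\"obius identity and bounding each incomplete sum $\sum_{m\ \mathrm{odd}}\leg{m}{p}\Phi_Y(\ell^2m/Y)$ by $\min(\sqrt p\log p,\,Y/\ell^2)$; splitting the $\ell$-sum at $\ell^2\asymp Y/(\sqrt x\log x)$ does yield $\ll x^{5/4}Y^{1/2}(\log x)^{-1/2}$ before normalization, and $x^{5/4}Y^{-1/2}\le x^{7/8}Y^{-1/4}$ precisely when $Y\ge x^{3/2}$, so the two regimes mesh and the stated error term follows; your main-term computation matches the paper's treatment of $S_1$. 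Your approach buys two things: it avoids the Soundararajan Poisson machinery and the optimization of $z$, and it shows that for $Y>x^{3/2}$ the character-sum estimate is unconditional. One remark: you attribute the large-$Y$ bound to the smoothness of $\Phi_Y$ via the truncation to $p\gg Y/(\ell^2 U)$, but that truncation is not actually needed --- the bound $\min(\sqrt p\log p,\,Y/\ell^2)$, i.e.\ P\'olya--Vinogradov played against the trivial bound, already gives $x^{5/4}Y^{1/2}(\log x)^{-1/2}$ by the splitting above; moreover the completed-sum/Poisson route would produce $U\sqrt p$ per term rather than $\sqrt p\log p$, so it is really P\'olya--Vinogradov (applied after partial summation against the smooth weight) that is doing the work. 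Neither point is a gap.
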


Note that the strength of Theorem~\ref{resd2smooth} is most prominent when $Y$ is large compared to $x$.

\section{Preliminaries}

We shall need the following lemma due to P\'olya and Vinogradov.

\begin{lemma} \label{polyvino}
Let $\chi(n)$ be a non-principal character modulo $q$ and $M \in \intz$ and $N \in \natn$.  Then we have
\[ \left| \sum_{M < n \leq M+N} \chi(n) \right| \leq 6 \sqrt{q} \log q. \]
\end{lemma}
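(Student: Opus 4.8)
The plan is to prove the lemma by the classical completion (finite Fourier) method, reducing the estimate to complete geometric sums and the size of Gauss sums. Throughout write $e(t) = e^{2\pi i t}$ and let $\|t\|$ denote the distance from $t$ to the nearest integer. The first step is to observe that it suffices to prove the bound with $q$ replaced by the conductor $f$ of $\chi$, i.e.\ the modulus of the primitive character $\chi^{*}$ inducing $\chi$. Indeed, since $\chi$ is non-principal we have $f \geq 2$, and since $f \mid q$ any estimate of the shape $\sqrt{f}\,(\log f + 1)$ is dominated by $6\sqrt{q}\,\log q$ (as $\log q \geq \log 2 > 1/5$). So the goal becomes $\bigl|\sum_{M<n\le M+N}\chi(n)\bigr| \le \sqrt{f}\,(\log f + 1)$.

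For the heart of the argument I would expand the $q$-periodic function $\chi$ in additive characters, $\chi(n) = \sum_{h \bmod q} c_\chi(h)\, e(hn/q)$ with $c_\chi(h) = \tfrac{1}{q}\sum_{a \bmod q}\chi(a)\,e(-ah/q)$. Non-principality makes the zero-frequency coefficient vanish, $c_\chi(0) = \tfrac{1}{q}\sum_{a}\chi(a) = 0$, and after interchanging the order of summation this gives
\[ \sum_{M<n\le M+N}\chi(n) = \sum_{\substack{h \bmod q\\ h\neq 0}} c_\chi(h) \sum_{M<n\le M+N} e(hn/q). \]
Each inner geometric sum is a ratio of sines, so the elementary inequality $|\sin \pi t| \ge 2\|t\|$ yields the bound $\bigl|\sum_{M<n\le M+N} e(hn/q)\bigr| \le \tfrac{1}{2\|h/q\|}$, which is finite since $h \not\equiv 0 \bmod q$.

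It remains to control the size and, crucially, the support of the coefficients $c_\chi(h)$. In the primitive case $f=q$ one has the separability relation $\sum_{a}\chi(a)\,e(ah/q) = \overline{\chi(h)}\,\tau(\chi)$ with $|\tau(\chi)| = \sqrt{q}$, so $|c_\chi(h)| = q^{-1/2}$ for $(h,q)=1$ and $c_\chi(h)=0$ otherwise; substituting this and summing the harmonic-type series $\sum_{h=1}^{q-1}\|h/q\|^{-1} = q\sum_{h=1}^{q-1}\min(h,q-h)^{-1} \le 2q(\log q + 1)$ gives at once $\bigl|\sum \chi\bigr| \le \sqrt{q}\,(\log q + 1)$, which is comfortably below $6\sqrt{q}\,\log q$.

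The main obstacle, and the only genuinely delicate point, is to retain an absolute constant in the imprimitive case. Here one must avoid the naive sieve replacing $\chi(n)$ by $\chi^{*}(n)$ on integers coprime to $q$: that splits the sum into interval sums of $\chi^{*}$ and loses the unbounded factor $2^{\omega(q)}$, where $\omega(q)$ is the number of distinct prime factors of $q$. Instead I would evaluate $c_\chi(h)$ directly by factoring through the Chinese Remainder Theorem one prime at a time. A local computation shows that $c_\chi(h)$ vanishes unless $(q,h) = q/f$, in which case $|c_\chi(h)| = f^{-1/2}$; thus the contributing frequencies are exactly $h = (q/f)h'$ with $h' \in (\intz/f)^{*}$, and for these $\|h/q\| = \|h'/f\|$. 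Feeding this into the display collapses the sum to $\tfrac{1}{2\sqrt{f}}\sum_{h'=1}^{f-1}\|h'/f\|^{-1} \le \sqrt{f}\,(\log f + 1)$, which is precisely the conductor-scale bound sought in the first step and completes the proof.
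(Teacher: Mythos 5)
The paper offers no proof of this lemma at all---it simply cites \cite[Theorem 12.5]{HIEK}---so any comparison is really an assessment of your argument on its own terms. Your primitive case is correct and complete: the discrete Fourier expansion, the vanishing of $c_\chi(0)$, the bound $\left|\sum_{M<n\le M+N}e(hn/q)\right|\le 1/(2\|h/q\|)$, the evaluation $|c_\chi(h)|=q^{-1/2}$ for $(h,q)=1$, and the harmonic-sum estimate give $\sqrt{q}(\log q+1)\le 6\sqrt{q}\log q$. Since the only characters to which the paper actually applies the lemma are Legendre symbols modulo primes, which are primitive, that part already suffices for everything in the paper.

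The imprimitive case, however, contains a genuine error. You claim that $c_\chi(h)$ vanishes unless $(q,h)=q/f$, with $|c_\chi(h)|=f^{-1/2}$ on its support. This is false whenever $q$ has a prime factor $p$ with $p\nmid f$: factoring $q\,c_\chi(h)=\tau(\chi,-h)$ through the Chinese Remainder Theorem, the local factor at such a $p$ (where $\chi$ is locally principal) is a Ramanujan sum, and $c_p(h')=-1\neq 0$ when $p\nmid h'$. Concretely, for $q=15$ and $\chi$ induced by the quadratic character mod $f=3$, one computes $|c_\chi(1)|=\sqrt{3}/15\neq 0$ although $(15,1)\neq 15/3$. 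So the support of $c_\chi$ does not collapse to the conductor scale, and the target of your first step, $\sqrt{f}(\log f+1)$, is not established---indeed that conductor-level bound is false in general for imprimitive characters. Ironically, the ``naive sieve'' you rejected is exactly what works: with $q_1=\prod_{p\mid q,\,p\nmid f}p$, M\"obius inversion gives $\sum_{M<n\le M+N}\chi(n)=\sum_{d\mid q_1}\mu(d)\chi^{*}(d)\sum_{M/d<m\le (M+N)/d}\chi^{*}(m)$, hence at most $2^{\omega(q_1)}$ interval sums of the primitive $\chi^{*}$, each at most $\sqrt{f}(\log f+1)$ by your primitive case. The divisor factor is not ``unbounded'' relative to the stated target: $2^{\omega(m)}\le\sqrt{3m}$ for every $m\ge 1$ (the worst local factors are $2/\sqrt{2}$ and $2/\sqrt{3}$), and $q_1\le q/f$, so $2^{\omega(q_1)}\sqrt{f}\le\sqrt{3q}$ and the total is at most $\sqrt{3q}(\log q+1)\le 6\sqrt{q}\log q$ for $q\ge 3$. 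Replacing your final paragraph by this reduction repairs the proof.
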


\begin{proof}
This is Theorem 12.5 in \cite{HIEK}.
\end{proof}

The following mean-value theorem for quadratic character sums will be useful for us as well.

\begin{lemma} \label{jutest}
Let $\chi$ run over all real non-principal characters $\chi(n) = \left( \frac{D}{n} \right)$ with $|D| \leq X$.  We have
\[ \sum_{\chi} \left| \sum_{n \leq Y} \chi(n) \right|^2 \ll XY \log^2 X. \]
\end{lemma}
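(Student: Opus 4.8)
The plan is to open the square, exploit that each $\chi$ is real (so $\overline{\chi(n)}=\chi(n)$ and $\chi$ is completely multiplicative with values in $\{0,\pm1\}$), and thereby convert the problem into one about incomplete Kronecker-symbol sums in the $D$-aspect. Writing $\chi=\leg{D}{\cdot}$, I would reverse the order of summation to get
\[
\sum_{\chi}\left|\sum_{n\le Y}\chi(n)\right|^2 = \sum_{m,n\le Y}\ \sum_{\chi}\chi(mn) = \sum_{m,n\le Y}\ \sum_{|D|\le X}\leg{D}{mn},
\]
the $D$ ranging over those values that produce non-principal real characters. For fixed $m,n$ the map $D\mapsto\leg{D}{mn}$ is a real character to a modulus dividing $4mn$, and it is principal exactly when $mn$ is a perfect square. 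I would therefore split the pairs $(m,n)$ according to whether $mn$ is a square.

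For the diagonal ($mn$ a square) the inner sum over $D$ is essentially a count of $D$ coprime to $mn$, hence $\ll X$; and a standard elementary count shows there are $\ll Y\log Y$ pairs $m,n\le Y$ with $mn$ a perfect square (write $m=gr^2$, $n=gt^2$ with $\gcd(r,t)$ forced and sum $\sum_{g\le Y}Y/g$). Thus the diagonal contributes $\ll XY\log Y$, which is absorbed into the claimed bound once $Y\le X$. The complementary range $Y\ge X$ needs no expansion at all: each of the $\ll X$ characters has conductor $\ll X$, so Lemma~\ref{polyvino} gives $\left|\sum_{n\le Y}\chi(n)\right|\ll\sqrt{X}\log X$, whence the whole sum is $\ll X^2\log^2 X\le XY\log^2 X$. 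Hence it remains to bound the off-diagonal terms in the regime $Y\le X$.

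For the off-diagonal ($mn$ not a square) the character $\leg{\cdot}{mn}$ is non-principal of conductor $\le 4mn$, and $\sum_{|D|\le X}\leg{D}{mn}$ is an incomplete character sum of length $2X$. The plan is to evaluate it by Poisson summation in $D$ modulo $4mn$: this turns each inner sum into a dual sum of length $\asymp mn/X$ weighted by Gauss sums attached to $\leg{\cdot}{mn}$, with the frequency-zero term vanishing precisely because the character is non-principal. Using that each Gauss sum has absolute value $\le\sqrt{mn}$ and then summing over all $m,n\le Y$ and over the dual variable — exploiting cancellation in the average of the Gauss sums across the moduli $mn$ — should yield $\ll XY\log^2 X$. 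This Poisson-plus-Gauss-sum step is exactly the quadratic large sieve mechanism of Heath-Brown and Jutila.

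I expect the averaging of the Gauss sums to be the main obstacle, and it is genuinely necessary: in the middle range $\sqrt{X}<Y\le X$ one cannot afford to estimate the off-diagonal terms individually. Indeed, applying Lemma~\ref{polyvino} to each inner sum gives only $\ll\sqrt{mn}\,\log(mn)$, and $\sum_{m,n\le Y}\sqrt{mn}\asymp Y^{3}$ overshoots $XY$ as soon as $Y>\sqrt{X}$. The saving must therefore come from the orthogonality of the Gauss sums as the modulus $mn$ varies, rather than from any pointwise bound, and making that averaging quantitative while holding the logarithmic losses down to $\log^2 X$ is the crux of the argument.
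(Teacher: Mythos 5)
The paper does not actually prove this lemma; it simply cites it as Lemma 5 of Jutila's paper, so there is no internal argument to compare yours against. Judged on its own terms, your proposal correctly sets up the standard framework: opening the square to get $\sum_{m,n\le Y}\sum_{|D|\le X}\leg{D}{mn}$, isolating the diagonal $mn=\Box$ (with the right count $\ll Y\log Y$ of such pairs and the resulting $\ll XY\log Y$ contribution), and disposing of the range $Y\ge X$ by P\'olya--Vinogradov applied character by character. All of that is sound.

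The gap is that the entire content of the lemma lives in the one step you do not carry out: the off-diagonal terms in the range $\sqrt{X}<Y\le X$. You correctly diagnose that pointwise P\'olya--Vinogradov on the dual sums loses a factor (giving $\sum_{m,n\le Y}\sqrt{mn}\log(mn)\asymp Y^3\log Y$, which exceeds $XY$ once $Y>\sqrt{X}$), and you assert that Poisson summation in $D$ modulo $4mn$ followed by ``cancellation in the average of the Gauss sums across the moduli $mn$'' will recover the bound $XY\log^2X$. But that averaging is precisely the hard part of the Heath-Brown/Jutila mechanism, and you give no argument for it: after Poisson summation the Gauss sum at frequency $h$ essentially reproduces $\leg{h}{mn}\sqrt{mn}$, so one is faced with a new bilinear sum $\sum_{m,n}\sum_{h\asymp mn/X}\leg{h}{mn}$ (weighted by a divisor function once you collapse $mn$ to a single modulus $k$), and extracting the saving requires either a reciprocity-plus-iteration argument or a genuine large-sieve input, together with a reduction to squarefree or primitive moduli to make the Gauss sum evaluation clean. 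None of this is routine, and "should yield" plus "I expect the averaging of the Gauss sums to be the main obstacle" is an acknowledgement that the crux is missing rather than a proof of it. As written, the proposal establishes the lemma only for $Y\le\sqrt{X}$ and $Y\ge X$, and is a plan, not a proof, in between.
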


\begin{proof}
This is Lemma 5 in \cite{Jutila}
\end{proof}

\begin{lemma} \label{charsumprimeRH}
Assuming the truth of the generalized Riemann hypothesis for Dirichlet $L$-functions.  Then for any non-principal Dirichlet character $\chi \pmod{q}$, we have
\[ \sum_{ p \leq X} \chi(p) \ll X^{1/2} \log (qX). \]
\end{lemma}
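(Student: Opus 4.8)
The plan is to pass from the sum over primes to the von Mangoldt weighted sum $\psi(X,\chi)=\sum_{n\le X}\Lambda(n)\chi(n)$ and to exploit the explicit formula together with the assumed Riemann hypothesis for $L(s,\chi)$. First I would reduce to the case of a primitive character: if $\chi$ is induced by the primitive character $\chi^{\ast}$ of conductor $q^{\ast}\mid q$, then $\chi(p)=\chi^{\ast}(p)$ for $p\nmid q$ and $\chi(p)=0$ otherwise, so
\[ \sum_{p\le X}\chi(p)=\sum_{p\le X}\chi^{\ast}(p)+O(\log q), \]
and $\log q\ll X^{1/2}\log(qX)$, so it suffices to treat $\chi$ primitive modulo $q$. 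Next I relate the prime sum to $\psi$: since the contribution of prime powers $p^{k}$ with $k\ge 2$ is $\ll\sum_{k\ge 2}\pi(X^{1/k})\ll X^{1/2}$, one has $\sum_{n\le X}\Lambda(n)\chi(n)/\log n=\sum_{p\le X}\chi(p)+O(X^{1/2})$, which reduces the claim to a bound for $\psi(t,\chi)$ after partial summation.

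For the analytic input I would invoke the explicit formula for a primitive character $\chi$ modulo $q$, namely
\[ \psi(X,\chi)=-\sum_{|\gamma|\le T}\frac{X^{\rho}}{\rho}+O\!\Big(\tfrac{X\log^{2}(qX)}{T}+\log X\Big), \]
the sum running over the nontrivial zeros $\rho=\beta+i\gamma$ of $L(s,\chi)$. Under the generalized Riemann hypothesis $\beta=1/2$, so $|X^{\rho}|=X^{1/2}$, and the classical zero-counting estimate $\#\{\rho:|\gamma-u|\le 1\}\ll\log(q(|u|+2))$ gives $\sum_{|\gamma|\le T}1/|\rho|\ll\log(qT)\log(T)$. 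Choosing $T=X^{1/2}$ then yields $\psi(X,\chi)\ll X^{1/2}\log^{2}(qX)$, with an analogous bound for $\psi(t,\chi)$ uniformly in $t\le X$.

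The main obstacle is extracting the \emph{single} power of $\log(qX)$ claimed in the statement, since the weighted function $\psi$ genuinely carries $\log^{2}(qX)$. The cleanest resolution is to phrase the explicit formula directly for the unweighted prime sum, $\pi(X,\chi)=-\sum_{\rho}\mathrm{li}(X^{\rho})+O(\cdots)$, and to use the bound $|\mathrm{li}(X^{\rho})|\ll X^{1/2}/(|\gamma|\log X)$ for $|\gamma|\ge 1$: this gains a factor $1/\log X$ over the estimate for $X^{\rho}/\rho$, so that $\sum_{\rho}|\mathrm{li}(X^{\rho})|\ll X^{1/2}(\log q+\log X)\ll X^{1/2}\log(qX)$. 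Equivalently one may argue by cases: when $q\le X$ one has $\log(qX)\asymp\log X$, and the $\psi$-bound combined with partial summation already gives $\pi(X,\chi)\ll X^{1/2}\log(qX)$; when $\log(qX)\ge X^{1/2}$ the trivial bound $|\sum_{p\le X}\chi(p)|\le\pi(X)\le X\le X^{1/2}\log(qX)$ suffices; and the remaining intermediate range is covered by the $\mathrm{li}$-estimate. I expect this sharpening of $\log^{2}$ to $\log$, uniformly in $q$, to be the only delicate point, the rest being a routine assembly of the explicit formula and the GRH zero-free information.
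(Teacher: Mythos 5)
Your argument is correct and is essentially the paper's: the authors simply cite \cite[Proposition 5.25]{HIEK}, which is exactly the GRH explicit-formula bound for $\psi(x,\chi)$ that you derive, followed by the same reduction from prime powers to primes and partial summation. The only remark worth making is that the cited bound has the shape $\psi(x,\chi)\ll x^{1/2}\log x\,\log(qx)$ (one factor $\log x$, not $\log^{2}(qx)$), so partial summation already produces the single $\log(qX)$ uniformly in $q$ and the case analysis in your last paragraph is unnecessary.
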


\begin{proof}
This follows readily from \cite[Proposition 5.25]{HIEK}.
\end{proof}

\begin{lemma}[Large sieve inequality for certain fixed order characters]
Let $(a_m)_{m \in\natn}$ be an arbitrary sequence of complex number and $k\in \{ 3, 4, 6 \}$.  Then, for any $\varepsilon >0$,
\begin{equation} \label{lsineq}
\begin{split}
 \sum_{Q < q \leq 2Q} \ & \sideset{}{^{\star}} \sum_{\substack{\chi \bmod{q} \\ \chi^k = \chi_0, \chi \neq \chi_0}} \left| \ \sideset{}{^*} \sum_{M < m \leq 2M} a_m \chi(m) \right|^2 \\
 & \ll (QM)^{\varepsilon} \min \left\{ Q^{5/3}+M, Q^{4/3}+Q^{1/2}M, Q^{11/9}+Q^{2/3}M, Q+Q^{1/3}M^{5/3}+M^{12/5} \right\}  \sideset{}{^*} \sum_{M < m \leq 2M} |a_m|^2,
 \end{split}
 \end{equation}
 where the implied constant depends on $\varepsilon$, the star on the sum over $\chi$ restricts the sum to primitive characters and the asterisks attached to the sum over $m$ indicates that $m$ ranges over square-free integers.
\end{lemma}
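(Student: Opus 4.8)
The plan is to convert the sum over fixed-order Dirichlet characters into a large sieve over power residue symbols in the relevant quadratic ring of integers, and then to assemble \eqref{lsineq} by establishing several large sieve inequalities for those symbols — one for each entry of the minimum — each being sharpest in a different range of $Q$ against $M$, so that the final bound is obtained simply by taking the minimum. First I would classify the characters being summed: for $k=3$ they are the primitive cubic characters, for $k=4$ the primitive quadratic and quartic characters, and for $k=6$ the primitive quadratic, cubic, and sextic characters. A sextic character factors as a product $\chi=\psi\eta$ of a quadratic character $\psi$ and a cubic character $\eta$, and absorbing the quadratic factor into the coefficients via $a_m \mapsto a_m\psi(m)$ preserves $\sideset{}{^*}\sum_m |a_m|^2$; this reduces the sextic case to the quadratic and cubic ones. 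The quadratic (order-$2$) contributions appearing for $k=4,6$ are then handled directly by the mean-value bound of Lemma~\ref{jutest}.

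For the genuinely new cubic and quartic contributions I would pass to residue symbols. Via cubic (respectively quartic) reciprocity, a primitive cubic (quartic) character modulo $q$ is, up to the finitely many units and conjugates and a bounded conductor correction, the symbol $m \mapsto \left(\frac{m}{n}\right)_3$ (respectively $\left(\frac{m}{n}\right)_4$) attached to a squarefree primary element $n$ of $\mathbb{Z}[\omega]$ (respectively $\mathbb{Z}[i]$) with $N(n)=q$. This turns the left-hand side of \eqref{lsineq} into sums of the shape
\[
\sum_{\substack{n \ \text{squarefree, primary} \\ Q < N(n) \le 2Q}} \Bigl| \ \sideset{}{^*}\sum_{M < m \le 2M} a_m \left(\tfrac{m}{n}\right)_3 \Bigr|^2
\]
together with the analogous quartic sum, after a modest amount of bookkeeping to account for imprimitivity, the squarefree support, and the reciprocity conductor factors.

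Next, each entry of the minimum would come from a distinct estimate for these residue-symbol sums. Heath-Brown's cubic large sieve and its quartic analogue furnish certain of the entries directly. The remaining entries I would derive through the duality principle of the large sieve followed by Poisson summation in $\mathbb{Z}[\omega]$ and $\mathbb{Z}[i]$: the dual sum $\sideset{}{^*}\sum_m \bigl|\sum_n b_n \left(\tfrac{m}{n}\right)_k\bigr|^2$ opens into a diagonal term of size $Q\|b\|^2$ and off-diagonal contributions governed by Gauss sums over the quadratic ring, which after Poisson become exponential sums that can be estimated and, following Heath-Brown's recursive scheme, iterated. Different ways of organizing this recursion, and different points at which one terminates it, yield the several entries; taking the minimum over all of them gives \eqref{lsineq}.

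The main obstacle is precisely this Poisson-summation and recursion analysis, which is what produces the non-classical exponents $11/9$, $5/3$, and $12/5$. One must estimate the arising Gauss sums and dual exponential sums over $\mathbb{Z}[\omega]$ and $\mathbb{Z}[i]$ with sufficient precision and iterate the large sieve efficiently, all the while keeping the squarefree and primitivity restrictions and the reciprocity conductor factors under control uniformly across $k \in \{3,4,6\}$. By comparison, the reduction to residue symbols, the sextic-to-cubic factorization, and the quadratic pieces handled by Lemma~\ref{jutest} are routine.
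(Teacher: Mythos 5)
Your overall framework --- reduce to cubic and quartic residue symbols via reciprocity, invoke Heath-Brown's cubic large sieve and a quartic analogue for some entries of the minimum, and obtain the others by duality, Poisson summation in $\mathbb{Z}[\omega]$ and $\mathbb{Z}[i]$, and a recursive iteration --- is indeed the machinery behind the results this lemma rests on. But the paper does not redo any of that work: its proof is purely by citation. The quadratic characters occurring for $k=4,6$ are bounded by Heath-Brown's quadratic large sieve, the cases $k=3$ and $k=6$ are exactly Theorems 1.4 and 1.5 of Baier--Young, and the case $k=4$ follows the argument of Section 6 of Gao--Zhao with one input replaced by a theorem from another reference. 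You, by contrast, leave the decisive step --- the Poisson and recursion analysis that actually produces the exponents $5/3$, $4/3$, $11/9$, $12/5$ --- as an acknowledged obstacle, so your proposal does not yet establish any entry of the minimum beyond what would in any case have to be quoted.

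There are also two concrete errors. First, Lemma~\ref{jutest} cannot handle the quadratic subfamily: it is a mean value estimate for the single coefficient sequence $a_m\equiv 1$ (the sharp cutoff sum $\sum_{n\le Y}\chi(n)$), whereas here the coefficients $(a_m)$ are arbitrary complex numbers, and no bound for arbitrary coefficients can be extracted from it. What is needed, and what the paper uses, is Heath-Brown's quadratic large sieve, which gives the bound $(QM)^{\varepsilon}(Q+M)\sum^{*}|a_m|^2$; since $Q+M$ is dominated by every entry of the minimum, that contribution is then absorbed. Second, your reduction of the sextic case by writing $\chi=\psi\eta$ and ``absorbing'' the quadratic factor $\psi$ into the coefficients does not work as stated: $\psi$ varies with $q$ and with $\chi$, so the modified coefficients $a_m\psi(m)$ are no longer a single fixed sequence across the family, which is precisely the structure a large sieve inequality requires. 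The sextic case genuinely needs the separate argument of Baier--Young, not a formal factorization.
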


\begin{proof}
For $k=4$ and $k=6$, the outer sums on the left-hand side of \eqref{lsineq} includes quadratic characters.  We use \cite[Theorem 1]{DRHB} to bound that part of the sum.  For other characters, we use the results in \cites{G&Zhao, B&Y}.  For $k=4$, this lemma is an improvement of \cite[Theorem 1.2]{G&Zhao} and the proof goes along the same line as Section 6 of \cite{G&Zhao}.  The only difference is, instead using \cite[Theorem 1.1]{G&Zhao}, one uses \cite[Theorem 1.3]{BGL} at the appropriate places.  For $k=3$ and $k=6$, this is \cite[Theorem 1.4]{B&Y} and \cite[Theorem 1.5]{B&Y}, respectively.
\end{proof}

\section{Proof of Theorem~\ref{resd2}}

By considering the group structure of $(\intz/p\intz)^*$, we easily conclude that $a^{(p-1)/2} \equiv 1 \pmod{p}$ if and only if $p \nmid a$ and $a$ is a square modulo $p$.   So 
\[ P_{(a,2)} (x) = \frac{1}{2} \sum_{2 < p \leq x} \left( \chi_0(a) + \left( \frac{a}{p} \right) \right) , \]
where $\chi_0$ is the principal character modulo $p$.  Therefore, we get
\begin{equation} \label{Ssplit}
S:=  \frac{1}{y} \sum_{2 \leq a \leq y} P_{(a,2)}(x)  = \frac{1}{2y} \sum_{2 < p \leq x} \sum_{a \leq y} \chi_0(a) + \frac{1}{2y} \sum_{2 < p \leq x} \sum_{a \leq y} \left( \frac{a}{p} \right) = S_1 + S_2,
 \end{equation}
say.  We easily have
\begin{equation} \label{s1est}
 S_1 = \frac{1}{2y} \sum_{2 < p \leq x} \left( [y] - \left[ \frac{y}{p} \right] \right) = \frac{1}{2y} \sum_{2 < p \leq x} \left( y - \frac{y}{p} + O(1) \right) = \frac{\pi(x)}{2} + O \left( \log \log x + \frac{\pi(x)}{y} \right) ,
 \end{equation}
where we have used the well-known bound
\begin{equation} \label{prepest}
 \sum_{p \leq x} \frac{1}{p} \ll \log \log x.
 \end{equation}
Now
\[  S_2 \ll \frac{1}{2y} \sum_{2 < p \leq x} \sum_{a \leq y} \left( \frac{a}{p} \right) \ll \frac{1}{2y} \sum_{2 < p \leq x} \left| \sum_{a \leq y} \left( \frac{a}{p} \right) \right| .  \]
Applying Lemma~\ref{polyvino}, we get that
\begin{equation} \label{s2est1}
S_2 \ll \frac{1}{2y} \sum_{p \leq x} \sqrt{p} \log p \ll \frac{x^{3/2}}{y} , 
\end{equation}
by the prime number theorem.  On the other hand, using Cauchy-Schwarz inequality, we conclude that
\[ \sum_{p \leq x} \left| \sum_{a \leq y} \left( \frac{a}{p} \right) \right| \leq \pi(x)^{1/2} \left( \sum_{p \leq x} \left| \sum_{a \leq y} \left( \frac{a}{p} \right) \right|^2 \right)^{1/2} \ll xy^{1/2} \log^{1/2} x , \]
where the last inequality comes from Lemma~\ref{jutest} and the prime number theorem.  Hence,
\begin{equation} \label{s2est2}
 S_2 \ll \frac{1}{y} x y^{1/2} \log^{1/2} x = \frac{x \log^{1/2} x}{y^{1/2}} .
 \end{equation}
Finally, combining \eqref{Ssplit}, \eqref{s1est}, \eqref{s2est1} and \eqref{s2est2}, we arrive at
\[ S = \frac{\pi(x)}{2} + O \left( \log \log x + \frac{\pi(x)}{y} + \min \left( \frac{x^{3/2}}{y} , \frac{x \log^{1/2} x}{y^{1/2}} \right) \right).  \]
The two terms in the minimum above are equal when $x/\log x = y$.  The theorem follows by an easy analysis.

\section{Proof of Theorem~\ref{cubquarsex}}

For $d=3$, $4$ and $6$, an argument similar to that in the previous section gives that
\[ \frac{1}{y} \sum_{a \leq y} P_{(a,d)} = \frac{1}{dy} \sum_{\substack{p \leq x \\ p \equiv 1 \bmod{d}}} \sum_{a \leq y} \chi_0(a) + \frac{1}{dy} \sum_{\substack{p \leq x \\ p \equiv 1 \bmod{d}}} \sum_{a \leq y} \sum_{\substack{\chi \bmod{p} \\ \chi^d = \chi_0, \ \chi \neq \chi_0}} \chi(a) =: S_1 + S_2, \]
say.  As before,
\begin{equation} \label{s1cubest}
 S_1 = \frac{\pi(x; 1,d)}{d} + O \left( \log \log x + \frac{\pi(x;1,d)}{y} \right) .
 \end{equation}
To estimate $S_2$, we note that for all $a \in \natn$, $a$ can be written uniquely as $l^2m$ with $l, m \in \natn$ and $m$ square-free.  Hence, using Cauchy's inequality,
\[ S_2^2 \ll \frac{\pi(x;1,d)}{y^2} \sum_{\substack{p \leq x \\ p \equiv 1 \bmod{d}}} \sum_{\substack{\chi \bmod{p} \\ \chi^d = \chi_0, \ \chi \neq \chi_0}} \left| \sideset{}{^*} \sum_{m \leq y} \left( \sum_{l \leq \sqrt{y/m}} \chi(l^2) \right) \chi(m) \right|^2 , \]
where the asterisks on the sum over $m$ indicates that $m$ runs ver square-free integers.  Now we will apply Lemma~\ref{lsineq} to $S_2$.   Aided by table in \cite[p. 897]{B&Y}, we get that
\begin{equation} \label{s2cubest}
 S_2 \ll (xy)^{\varepsilon} \sqrt{\pi(x;1,d)} \times \left\{ \begin{array}{cl} 1 & \mbox{if} \; x \leq y^{3/5} \\ x^{5/6}/y^{1/2} & \mbox{if} \;  y^{3/5} < x \leq y^{6/7} \\ x^{1/4} & \mbox{if} \; y^{6/7} < x \leq y^{6/5} \\ x^{2/3}/y^{1/2} & \mbox{if} \; y^{6/5} < x \leq y^{3/2} \\ x^{1/3} & \mbox{if} \; y^{3/2} < x \leq y^{9/5} \\ x^{11/18}/y^{1/2} & \mbox{if} \; y^{9/5} < x \leq y^{108/55} \\ y^{7/10} & \mbox{if} \; y^{108/55} < x < y^{11/5} \\ x^{1/6}y^{1/3} & \mbox{if} \; y^{11/5} < x \leq y^{5/2} \\ x^{1/2}/y^{1/2} & \mbox{if} \; y^{5/2} < x \end{array} \right.
 \end{equation}
Now a quick analysis will give that the $O$-terms in \eqref{s1cubest} can be absorbed into the bounds in \eqref{s2cubest} in all ranges.  The result thus follows.

\section{Proof of Theorem~\ref{resd2smooth}}

Set
\[ D(Y) = \{ d : Y \leq d \leq 2Y, d \; \mbox{odd and square-free} \} . \]
It is easy to see that
\begin{equation} \label{sizedy}
 \#D(Y) \sim \frac{4Y}{\pi^2} .
 \end{equation}
Now let $\Phi_Y(t)$ be a non-negative smooth function supported on the interval $(1,2)$ satisfying the conditions $\Phi_Y(t)=1$ for $t \in (1+1/U, 2-1/U)$ and such that 
\begin{equation} \label{Phidevbound}
\Phi_Y^{(j)}(t) \ll_j U^j
\end{equation}
 for all integers $j \geq 0$.  We need to find an asymptotic formula for 
\begin{equation} \label{Ssmoothsplit}
\begin{split}
 S &= \frac{1}{\#D(Y)} \sum_{(a,2)=1} \mu^2 (a) P_{(8a,2)} (x) \Phi_Y \left( \frac{a}{Y} \right) \\
 & = \frac{1}{2 \#D(Y)} \sum_{2 < p \leq x} \sum_{(a,2)=1} \mu^2 (a) \chi_0(8a) \Phi_Y \left( \frac{a}{Y} \right) + \frac{1}{2\#D(Y)} \sum_{2 < p \leq x} \sum_{(a,2)=1} \mu^2 (a) \left( \frac{8a}{p} \right) \Phi_Y \left( \frac{a}{Y} \right) .
 \end{split}
 \end{equation}
Let $S_1$ and $S_2$, respectively, denote the two sums above.  We shall study them separately.  \newline
 
Recalling that the support of $\Phi_Y$ lies inside the interval $(1,2)$ and the bound \eqref{Phiderivbound}, we have
\begin{equation} \label{S1split}
 S_1 = \frac{1}{2\# D(Y)} \sum_{2 < p \leq x} \left( \sum_{\substack{(a,2)=1 \\ Y \leq a \leq 2Y}} \mu^2 (a) \chi_0(8a) + O \left( \frac{Y}{U} \right) \right) = \frac{1}{2\# D(Y)} \sum_{2 < p \leq x} \sum_{\substack{(a,2p)=1 \\ Y \leq a \leq 2Y}} \mu^2 (a) + O \left( \frac{\pi(x)}{ U} \right)  . 
 \end{equation}
Using M\"obius inversion, we get 
\begin{equation} \label{mobsplit}
 \sum_{2 < p \leq x} \sum_{\substack{(a,2p)=1 \\ Y \leq a \leq 2Y}} \mu^2 (a) = \sum_{2 < p \leq x} \sum_{\substack{(a,2)=1 \\ Y \leq a \leq 2Y}} \mu^2 (a) \sum_{d|(a,p)} \mu(d) = \sum_{2 < p \leq x} \sum_{d|p} \mu(d) \sum_{\substack{(a,2)=1, \ d|a \\ Y \leq a \leq 2Y}} \mu^2 (a) .
 \end{equation}
Now it is easy to see that the above can be recast as
\begin{equation} \label{sfest}
 \sum_{2 < p \leq x} \sum_{\substack{Y \leq a \leq 2Y \\ (a,2)=1}} \mu^2(a) + O \left( \sum_{p \leq x} \sum_{\substack{ Y \leq a \leq 2Y \\ p|a}} 1 \right) = \pi(x) \# D(Y) + O \left( Y \sum_{p \leq x} \frac{1}{p} \right). 
 \end{equation}
Now combining \eqref{S1split}, \eqref{mobsplit}, \eqref{sfest} and \eqref{prepest}, we arrive at
\begin{equation} \label{S1smoothest}
S_1 = \frac{\pi(x)}{2} + O \left( \frac{Y}{\# D(Y)} \log \log x + \frac{\pi(x)}{U} \right) .
\end{equation}

The treatment of $S_2$ will be more complicated.  Let $z$ be a positive number to be chosen later.  We start with 
\[ \mu^2(a) = M_z(a) + R_z(a) , \; \mbox{where} \; M_z(a) = \sum_{\substack{ l^2 |a \\ l\leq z}} \mu(l) \; \mbox{and} \; R_z(a) = \sum_{\substack{l^2|a \\ l > z}} \mu(l) . \]
We split $S_2$ as 
\begin{equation} \label{S2smoothsplit}
 S_2 = S_{2,1} + S_{2,2} 
 \end{equation}
where
\[ S_{2,1} = \frac{1}{2\#D(Y)} \sum_{2 < p \leq x} \sum_{(a,2)=1} M_z(a) \left( \frac{8a}{p} \right) \Phi_Y \left( \frac{a}{Y} \right) \;
\mbox{and}
\; S_{2,2} = \frac{1}{2\#D(Y)} \sum_{2 < p \leq x} \sum_{(a,2)=1} R_z(a) \left( \frac{8a}{p} \right) \Phi_Y \left( \frac{a}{Y} \right) . \]
We first deal with $S_{2,2}$.  Interchanging summations gives us
\[ S_{2,2} \ll \frac{1}{\#D(Y)} \sum_{l > z} \sum_{\substack{m \geq 1 \\ (m,2)=1}} \Phi_Y \left( \frac{l^2 m}{Y} \right) \left| \sum_{2 < p \leq x}  \left( \frac{8l^2 m}{p} \right) \right| . \]
Using Lemma~\ref{charsumprimeRH}, we get that the above is
\begin{equation} \label{S22est}
 S_{2,2} \ll \frac{1}{\#D(Y)} \sum_{l > z} \sum_{m \geq 1} \Phi_Y \left( \frac{l^2 m}{Y} \right) x^{1/2} \log (l^2 m x) \ll \frac{\#D(Y)}{Y} \frac{x^{1/2} \log (xYz)}{z} .
 \end{equation}

  We now evaluate the inner sum of $S_{2,1}$ following a method of K. Soundararajan in \cite{sound1} by applying the Poisson summation
    formula to the sum over $d$. For all odd
    integers $k$ and all integers $m$, we introduce the Gauss-type
    sums
\begin{equation*}
\label{010}
    \tau_m(k) := \sum_{a \shortmod{k}}\left( \frac {a}{k} \right) e \left( \frac {am}{k} \right) =:
    \left( \frac {1+i}{2}+\left( \frac {-1}{k} \right)\frac {1-i}{2}\right) G_m(k).
\end{equation*}
We quote \cite[Lemma 2.3]{sound1} which determines $G_m(k)$.
\begin{lemma}
\label{lem1}
   If $(k_1,k_2)=1$ then $G_m(k_1k_2)=G_m(k_1)G_m(k_2)$. Suppose that $p^a$ is
   the largest power of $p$ dividing $m$ (put $a=\infty$ if $m=0$).
   Then for $b \geq 1$ we have
\begin{equation*}
\label{011}
    G_m(p^b)= \left\{\begin{array}{cl}
    0  & \mbox{if $b\leq a$ is odd}, \\
    \varphi(p^b) & \mbox{if $b\leq a$ is even},  \\
    -p^a  & \mbox{if $b=a+1$ is even}, \\
    (\frac {m/p^a}{p})p^a\sqrt{p}  & \mbox{if $b=a+1$ is odd}, \\
    0  & \mbox{if $b \geq a+2$}.
    \end{array}\right.
\end{equation*}
\end{lemma}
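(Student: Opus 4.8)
The statement is quoted from \cite[Lemma 2.3]{sound1}, so one option is simply to cite it; but it is an entirely classical evaluation of the quadratic Gauss sum, and the plan I would follow is to recover it directly from the definition of $\tau_m(k)$ in two stages: the multiplicativity in $k$, and the local computation at prime powers. Throughout I write $\eta_k := \frac{1+i}{2} + \leg{-1}{k}\frac{1-i}{2}$, so that $\tau_m(k) = \eta_k\, G_m(k)$; for odd $k$ one checks $\eta_k = i^{(k-1)/2}$, i.e. $\eta_k = 1$ if $k \equiv 1 \pmod 4$ and $\eta_k = i$ if $k \equiv 3 \pmod 4$.

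For the multiplicativity, let $(k_1,k_2)=1$ and reparametrize the residues modulo $k_1k_2$ by the Chinese Remainder Theorem: as $u$ runs over residues modulo $k_1$ and $v$ over residues modulo $k_2$, the integer $a=k_2u+k_1v$ runs over a complete residue system modulo $k_1k_2$. Using $\leg{a}{k_1k_2}=\leg{a}{k_1}\leg{a}{k_2}$ together with $a\equiv k_2u\pmod{k_1}$ and $a\equiv k_1v\pmod{k_2}$, and the factorization $e\left(\frac{am}{k_1k_2}\right)=e\left(\frac{um}{k_1}\right)e\left(\frac{vm}{k_2}\right)$, the sum splits and one obtains $\tau_m(k_1k_2)=\leg{k_2}{k_1}\leg{k_1}{k_2}\tau_m(k_1)\tau_m(k_2)$. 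Quadratic reciprocity for the Jacobi symbol gives $\leg{k_2}{k_1}\leg{k_1}{k_2}=(-1)^{\frac{k_1-1}{2}\frac{k_2-1}{2}}$, while the elementary identity $\frac{k_1k_2-1}{2}=\frac{k_1-1}{2}+\frac{k_2-1}{2}+2\cdot\frac{k_1-1}{2}\cdot\frac{k_2-1}{2}$ yields, via $\eta_k=i^{(k-1)/2}$, the relation $\eta_{k_1k_2}=(-1)^{\frac{k_1-1}{2}\frac{k_2-1}{2}}\eta_{k_1}\eta_{k_2}$; the two sign factors cancel and $G_m(k_1k_2)=G_m(k_1)G_m(k_2)$ follows.

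For the local evaluation at $p^b$, write $m=p^am'$ with $(m',p)=1$ (with $a=\infty$ if $m=0$) and use $\leg{a}{p^b}=\leg{a}{p}^b$, which is trivial on $(\intz/p\intz)^*$ when $b$ is even and equals $\leg{a}{p}$ when $b$ is odd. If $b$ is even then $\tau_m(p^b)$ is the Ramanujan sum $\sum_{d\mid(m,p^b)}d\,\mu(p^b/d)$, equal to $\varphi(p^b)$ when $p^b\mid m$, to $-p^a$ when $p^{b-1}\,\|\,m$, and to $0$ when $p^{b-1}\nmid m$; since here $p^b\equiv1\pmod4$ we have $\eta_{p^b}=1$, so these are already the values of $G_m(p^b)$. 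If $b$ is odd I distinguish three subcases by the relation between $b$ and $a$: when $b\le a$ the exponential is $1$ and $\sum_{a\bmod p^b}\leg{a}{p}=0$; when $b=a+1$ the summand depends on $a$ only modulo $p$, so $\tau_m(p^b)=p^{b-1}\sum_{a\bmod p}\leg{a}{p}e\left(\frac{am'}{p}\right)=p^a\leg{m'}{p}g_p$, where $g_p=\sum_{a\bmod p}\leg{a}{p}e\left(\frac ap\right)$ is the classical quadratic Gauss sum; and when $b\ge a+2$, grouping $a=u+pv$ produces an inner complete exponential sum to the nontrivial modulus $p^{b-a-1}$, which vanishes.

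The single nonelementary input is Gauss's determination of the sign, $g_p=\eta_p\sqrt p$ (that is, $\sqrt p$ for $p\equiv1$ and $i\sqrt p$ for $p\equiv3\pmod4$), which I would cite rather than reprove. Granting it, in the case $b=a+1$ odd one has $p^b\equiv p\pmod4$, hence $\eta_{p^b}=\eta_p$, and therefore $G_m(p^b)=\eta_p^{-1}\tau_m(p^b)=p^a\leg{m'}{p}\sqrt p=p^a\leg{m/p^a}{p}\sqrt p$, exactly as stated; dividing the displayed values of $\tau_m(p^b)$ by $\eta_{p^b}$ disposes of the remaining cases. I expect the only genuine obstacle to be bookkeeping: matching the parity-dependent factor $\eta_{p^b}$ to the value of $\tau_m(p^b)$ in each case, and invoking the correct sign in Gauss's theorem so that the claimed formula emerges precisely.
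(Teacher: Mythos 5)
The paper offers no argument for this lemma at all: it is imported verbatim as \cite[Lemma 2.3]{sound1}, with the one-line justification ``We quote Lemma 2.3 of \cite{sound1}.'' Your proposal therefore goes well beyond the paper by supplying a self-contained classical derivation, and the derivation is correct: the CRT splitting gives $\tau_m(k_1k_2)=\leg{k_2}{k_1}\leg{k_1}{k_2}\tau_m(k_1)\tau_m(k_2)$, quadratic reciprocity cancels exactly against the cocycle of the normalizing factor $\eta_k$, and the local computation at $p^b$ (Ramanujan sum for $b$ even; the three subcases $b\le a$, $b=a+1$, $b\ge a+2$ for $b$ odd, with Gauss's sign theorem as the only nonelementary input) reproduces every line of the table, including the normalization $\eta_{p^b}=1$ for $b$ even and $\eta_{p^b}=\eta_p$ for $b$ odd. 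What the citation buys the paper is brevity; what your argument buys is independence from \cite{sound1} and a transparent accounting of where the $\sqrt p$ and the sign come from. One small correction: the identity $\eta_k=i^{(k-1)/2}$ is false as written (take $k=5$: $i^{2}=-1\neq 1=\eta_5$); the correct exponent is $((k-1)/2)^2$, or simply the case description you give immediately after the ``i.e.''. Since your multiplicativity computation only uses the displayed relation $\eta_{k_1k_2}=(-1)^{\frac{k_1-1}{2}\frac{k_2-1}{2}}\eta_{k_1}\eta_{k_2}$, which is correct and is easily checked from the case values, this slip does not propagate; but you should either fix the exponent or verify the relation directly by cases. You might also rename either the summation variable $a$ in $\tau_m(k)$ or the exponent $a$ with $p^a\,\|\,m$, since the lemma's own notation forces them to collide in your subcase $b=a+1$.
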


   For a Schwartz function $F$, let
\begin{equation} \label{tildedef}
   \tilde{F}(\xi)=\frac {1+i}{2}\hat{F}(\xi)+\frac
   {1-i}{2}\hat{F}(-\xi)=\int\limits^{\infty}_{-\infty}\left(\cos(2\pi \xi
   x)+\sin(2\pi \xi x) \right)F(x) \dif x.
\end{equation}
    We quote Lemma 2.6 of \cite{sound1} which determines the
   inner sum of $S_{2,1}$.
\begin{lemma}
\label{lem2}
   Let $\Phi$ be a non-negative, smooth function supported in
   $(1,2)$. For any odd integer $k$,
\begin{equation*}
\label{013}
  \sum_{\gcd(d,2)=1}M_z(d)\left( \frac {d}{k} \right)
    \Phi\left( \frac {d}{X} \right)=\frac {X}{2k}\left( \frac {2}{k} \right) \sum_{\substack {\alpha \leq z \\ (\alpha, 2k)=1}}\frac {\mu(\alpha)}{\alpha^2}
    \sum_m(-1)^mG_m(k)\tilde{\Phi}\left( \frac {mX}{2\alpha^2k} \right),
\end{equation*}
where $\tilde{\Phi}$ is as defined in \eqref{tildedef}.
\end{lemma}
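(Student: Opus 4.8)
The plan is to follow Soundararajan's argument in \cite{sound1}, reducing the inner sum to a Poisson summation over $d$ in residue classes modulo $2k$. First I would expand $M_z(d)=\sum_{\alpha^2\mid d,\ \alpha\le z}\mu(\alpha)$ and interchange the order of summation, writing $d=\alpha^2 b$. Since $d$ is odd we must have both $\alpha$ and $b$ odd, and since $\left(\tfrac{\alpha^2}{k}\right)$ vanishes unless $(\alpha,k)=1$, only the terms with $(\alpha,2k)=1$ survive and each contributes the factor $\left(\tfrac{\alpha^2}{k}\right)=1$. This reduces the claim to evaluating, for each such $\alpha$, the inner sum $\sum_{(b,2)=1}\left(\tfrac{b}{k}\right)\Phi\!\left(\tfrac{\alpha^2 b}{X}\right)$.

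Next I would apply the Poisson summation formula to this sum. The function $r\mapsto\left(\tfrac{r}{k}\right)\mathbbm{1}_{r\ \mathrm{odd}}$ is periodic modulo $2k$ (recall $k$ is odd), so splitting $b$ into residues $r\bmod 2k$ and applying Poisson to each progression yields $\tfrac{X}{2k\alpha^2}\sum_m H_m(k)\,\hat\Phi\!\left(\tfrac{mX}{2k\alpha^2}\right)$, where $H_m(k):=\sum_{r\bmod 2k,\ r\ \mathrm{odd}}\left(\tfrac{r}{k}\right)e\!\left(\tfrac{mr}{2k}\right)$ and $\hat\Phi$ is the Fourier transform of $\Phi$.

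The heart of the proof is the evaluation of $H_m(k)$ and its conversion into the stated shape. I would detect the oddness of $r$ via $\mathbbm{1}_{r\ \mathrm{odd}}=\tfrac12\bigl(1-(-1)^r\bigr)$ and use $(-1)^r e\!\left(\tfrac{mr}{2k}\right)=e\!\left(\tfrac{(m+k)r}{2k}\right)$, thereby reducing $H_m(k)$ to a combination of complete sums $\Sigma(n)=\sum_{r\bmod 2k}\left(\tfrac{r}{k}\right)e\!\left(\tfrac{nr}{2k}\right)$. Splitting $r$ according to its class modulo $k$ shows $\Sigma(n)=0$ for $n$ odd and $\Sigma(n)=2\tau_{n/2}(k)$ for $n$ even. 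Since $k$ is odd exactly one of $m$ and $m+k$ is even, and in either case the relevant half-integer index is congruent to $m\bar 2\pmod k$, where $\bar 2$ is the inverse of $2$ modulo $k$; combining this with $\tau_{na}(k)=\left(\tfrac{a}{k}\right)\tau_n(k)$ I expect to obtain the clean identity $H_m(k)=(-1)^m\left(\tfrac{2}{k}\right)\tau_m(k)$, the sign $(-1)^m$ encoding which of the two cases occurs.

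Finally I would substitute $\tau_m(k)=\bigl(\tfrac{1+i}{2}+\left(\tfrac{-1}{k}\right)\tfrac{1-i}{2}\bigr)G_m(k)$ and fold the sum over $m$ using the symmetry $G_{-m}(k)=\left(\tfrac{-1}{k}\right)G_m(k)$, which follows from Lemma~\ref{lem1} together with the reality of $G_m(k)$. Replacing $m\mapsto -m$ in the part of the sum attached to $\hat\Phi(-\,\cdot\,)$ converts the weight $\tfrac{1+i}{2}\hat\Phi(\xi)+\tfrac{1-i}{2}\hat\Phi(-\xi)$ precisely into $\tilde\Phi(\xi)$ as defined in \eqref{tildedef}, and collecting the $\alpha$-sum with the prefactors produces the asserted formula. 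I expect the main obstacle to be the bookkeeping in these last two steps: obtaining the parity sign $(-1)^m$ uniformly across the even and odd cases of $m$, and matching the $\tfrac{1\pm i}{2}$ weights so that the $G_m$-symmetry folds $\hat\Phi$ into $\tilde\Phi$ without leaving stray factors of $\left(\tfrac{-1}{k}\right)$ or $\left(\tfrac{2}{k}\right)$.
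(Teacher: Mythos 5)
Your proposal is correct, and it is essentially the argument of Soundararajan that the paper relies on: the paper offers no proof of this lemma at all, simply quoting it as Lemma 2.6 of \cite{sound1}, so what you have written is a faithful reconstruction of the omitted proof. The key steps all check out: expanding $M_z$ and extracting $\left(\frac{\alpha^2}{k}\right)$ leaves exactly the condition $(\alpha,2k)=1$; Poisson summation modulo $2k$ produces $H_m(k)=\sum_{r \bmod 2k,\ r \text{ odd}}\left(\frac{r}{k}\right)e\left(\frac{mr}{2k}\right)$ with the prefactor $\frac{X}{2k\alpha^2}$; the parity splitting gives $\Sigma(n)=(1+(-1)^n)\sum_{s \bmod k}\left(\frac{s}{k}\right)e\left(\frac{ns}{2k}\right)$, hence $\Sigma(n)=0$ for odd $n$ and $2\tau_{n/2}(k)$ for even $n$, which yields $H_m(k)=(-1)^m\left(\frac{2}{k}\right)\tau_m(k)$ exactly as you predict; and the relation $\tau_{-m}(k)=\left(\frac{-1}{k}\right)\tau_m(k)$ (equivalently $G_{-m}(k)=\left(\frac{-1}{k}\right)G_m(k)$) folds the $\hat\Phi(\pm\xi)$ weights into $\tilde\Phi$ with the factor $\frac{1+i}{2}+\left(\frac{-1}{k}\right)\frac{1-i}{2}$ cancelling cleanly against the definition of $G_m(k)$.
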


Applying Lemmas~\ref{lem1} and~\ref{lem2}, we transform $S_{2,1}$ into
\begin{equation} \label{S21split}
\begin{split}
 S_{2,1} & = \frac{Y}{4\#D(Y)} \sum_{2<p\leq x} \frac{1}{p} \sum_{\substack{\alpha \leq z \\ (\alpha, 2p)=1}} \frac{\mu(\alpha)}{\alpha^2} \sum_m (-1)^m G_m(p) \tilde{\Phi} \left( \frac{mY}{2\alpha^2 p} \right) \\
 & = \frac{Y}{4\#D(Y)} \sum_{2 < p \leq x} \frac{1}{\sqrt{p}} \sum_{\substack{\alpha \leq z \\ (\alpha, 2p)=1}} \frac{\mu(\alpha)}{\alpha^2} \sum_{m \neq 0} (-1)^m \left( \frac{m}{p} \right) \tilde{\Phi} \left( \frac{mY}{2 \alpha^2 p} \right) = S_{m=\Box} + S_{m \neq \Box}.
 \end{split}
 \end{equation}
where $S_{m=\Box}$ is the part of $S_{2,1}$ with $m$ being a square and $S_{m\neq \Box}$ the complementary sum.  We shall also use the notations $m = \Box$ and $m \neq \Box$ to mean, respectively, $m$ is a square and $m$ is not a square. \newline

Now
\begin{equation} \label{Ssq}
S_{m=\Box} = \frac{Y}{2\#D(Y)} \sum_{2 < p \leq x} \frac{1}{\sqrt{p}} \sum_{\substack{\alpha \leq z \\ (\alpha, 2p)=1}} \frac{\mu(\alpha)}{\alpha^2} \sum_{\substack{m =1 \\ (m,p)=1}}^{\infty} (-1)^m \tilde{\Phi} \left( \frac{m^2Y}{2\alpha^2 p} \right) .
\end{equation}
Note that trivially estimating the integral in \eqref{tildedef} gives
\begin{equation*}
    \tilde{\Phi}(\xi)\ll 1,
\end{equation*}
and integration by parts yields
\begin{equation*}
\label{17}
    \tilde{\Phi}(\xi)=\frac {-1}{2\pi \xi}\left(\int\limits^{1+1/U}_{1}+\int\limits^{2}_{2-1/U} \right)
    \Phi'(x) \Bigl(\sin(2\pi \xi
   x)-\cos(2\pi \xi x) \Bigr ) \dif x \ll \frac 1{|\xi|}.
\end{equation*}
These bounds for $\tilde{\Phi}$ give
\[ \sum_{m=1}^{\infty} \left| \tilde{\Phi} \left( \frac{m^2Y}{2 \alpha p} \right) \right| \ll \sum_{m \leq (2 \alpha^2p/Y)^{1/2}} 1 + \sum_{m > (2 \alpha^2p/Y)^{1/2}} \frac{2\alpha^2 p}{m^2 Y} \ll \alpha \sqrt{\frac{p}{Y}} . \]
So inserting the above estimate into \eqref{Ssq}, we get
\begin{equation} \label{Ssqbound}
 S_{m=\Box} \ll \frac{Y^{1/2}x}{\#D(Y)} \log z .
 \end{equation}
The estimation of $S_{m\neq\Box}$ is more complicated.  Re-arranging the orders of summation and using $\left( \frac{4\alpha^2}{p} \right)$ to detect the condition $(2\alpha, p)=1$, we get
\[ S_{m\neq\Box} = \frac{Y}{4\#D(Y)} \sum_{\substack{\alpha \leq z \\ (\alpha , 2)=1}} \frac{\mu(\alpha)}{\alpha^2} \sum_{m \neq 0, \Box} (-1)^m \sum_{p \leq x} \left( \frac{4 \alpha^2 m}{p} \right) \frac{1}{\sqrt{p}} \tilde{\Phi} \left( \frac{mY}{2 \alpha^2 p} \right).  \]

Using integration by parts and Lemma~\ref{charsumprimeRH}, we get that the inner-most sum in the above displayed equation is
\begin{equation*}
\begin{split}
 \ll \log (\alpha ( |m|+2) x) \left| \tilde{\Phi} \left( \frac{mY}{2 \alpha^2 x} \right) \right| + \int\limits_1^x & \frac{\log ( \alpha (|m|+2) V)}{V} \left| \tilde{\Phi} \left( \frac{mY}{2 \alpha^2 V} \right) \right| \dif V \\
 & + \int\limits_1^x \frac{Y}{\alpha^2} \frac{\log ( \alpha (|m|+2) V)}{V^2} \left| m \tilde{\Phi}' \left( \frac{mY}{2 \alpha^2 V} \right) \right| \dif V .
 \end{split}
 \end{equation*}

Thus
\begin{equation} \label{Snosqsplit}
 S_{m\neq\Box}  \ll R_1 + R_2 + R_3 ,
 \end{equation}
where
\[ R_1 = \frac{Y}{\#D(Y)} \sum_{\alpha \leq z} \frac{1}{\alpha^2} \sum_m \log ( \alpha (|m|+2) X ) \left| \tilde{\Phi} \left( \frac{mY}{2 \alpha^2 x} \right) \right|  , \]
\[ R_2 = \frac{Y}{\#D(Y)} \sum_{\alpha \leq z} \frac{1}{\alpha^2} \int\limits_1^x \sum_m \frac{\log ( \alpha (|m|+2) V)}{V} \left| \tilde{\Phi} \left( \frac{mY}{2 \alpha^2 V} \right) \right| \dif V  \]
and
\[ R_3 = \frac{Y}{\#D(Y)} \sum_{\alpha \leq z} \frac{Y}{\alpha^4} \int\limits_1^x \sum_m \frac{\log ( \alpha (|m|+2) V)}{V^2} \left| m \tilde{\Phi}' \left( \frac{mY}{2 \alpha^2 V} \right) \right| \dif V . \]

Now observe that
\begin{equation} \label{msumbound}
\begin{split}
 \sum_m \log \left( \alpha ( |m|+2) x \right) \left| \tilde{\Phi} \left( \frac{mY}{2 \alpha^2 x} \right) \right| & \ll \sum_{m \leq 2 \alpha^2 x/Y} \log \left( \alpha ( |m|+2) x \right) + \sum_{m > 2\alpha^2 x/Y} \log (\alpha ( |m|+2) x) \frac{\alpha^4 x^2 U}{m^2 Y^2}  \\
& \ll \frac{\alpha^2 x}{Y} \log (\alpha x) + \frac{\log (\alpha x) \alpha^2 x}{Y} U \ll \frac{\log (\alpha x) \alpha^2 x}{Y} U 
 \end{split}
 \end{equation}
 and similarly, using \eqref{Phidevbound},
 \begin{equation} \label{msumbound2}
  \sum_m \log (\alpha ( |m|+2) V) \left| m \tilde{\Phi}' \left( \frac{mY}{2 \alpha^2 V} \right) \right|  \ll \log(\alpha V) \frac{\alpha^4 V^2 U^2}{Y^2} .
 \end{equation}
From \eqref{msumbound}, it follows that
\begin{equation} \label{R1bound}
 R_1 \ll \frac{xU}{\#D(Y)} \sum_{\alpha \leq z} \log ( \alpha x) \ll \frac{xU}{\#D(Y)} z \log (zx) \; \;  \mbox{and} \; \; R_2 \ll \frac{U}{\#D(Y)} \sum_{\alpha \leq z} \int\limits_1^x \log (\alpha V) \dif V \ll \frac{U}{\#D(Y)} xz \log (zx).
 \end{equation}
Using \eqref{msumbound2}
\begin{equation} \label{R3bound}
R_3 \ll \frac{U^2}{\#D(Y)} \sum_{\alpha \leq z} \int\limits_1^x \log (\alpha V) \dif V \ll \frac{U^2}{\#D(Y)} xz \log (zx) .
\end{equation}
Combining \eqref{Snosqsplit}, \eqref{R1bound} and \eqref{R3bound}, we arrive at
\[ S_{m \neq \Box} \ll \frac{xz}{\#D(Y)} U^2 \log (zx) . \]
From this, together with \eqref{Ssqbound} and \eqref{S21split}, it follows
\[ S_{2,1} \ll \frac{Y^{1/2}x}{\#D(Y)} \log z + \frac{xz}{\#D(Y)} U^2 \log (zx) . \]
So from the above, together with \eqref{S22est}, \eqref{S2smoothsplit}, \eqref{S1smoothest}, \eqref{Ssmoothsplit} and \eqref{sizedy}, we get
\begin{equation} \label{Sest1}
 S = \frac{\pi(x)}{2} + O \left( \log \log x + \frac{\pi(x)}{ U}  + \frac{ x \log z}{Y^{1/2}} + \frac{xz}{Y} U^2 \log (zx) + \frac{x^{1/2} \log (xYz)}{z}  \right) .
 \end{equation}
Now it still remains to optimize $z$.  To this end, we set
\[ z = \frac{Y^{1/2}}{Ux^{1/4} .} \]
So \eqref{Sest1} becomes
\[ S = \frac{\pi(x)}{2} + O \left( \log \log x + \frac{\pi(x)}{U}  + \frac{ x \log Y}{Y^{1/2}} + \frac{Ux^{3/4}}{Y^{1/2}} \log (xY)  \right)  . \]
Now recall $U= x^{1/8}Y^{1/4}$ and $\pi(x) \sim x/\log x$.  The proof of Theorem~\ref{resd2smooth} is complete. \newline

\noindent{\bf Acknowledgments.} P. G. is supported in part by NSFC grants 11371043 and and 11871082 and L. Z. by the FRG grant PS43707 and the Faculty Silverstar Fund PS49334.  Parts of this work were done when P. G. visited the University of New South Wales (UNSW) in June 2017. He wishes to thank UNSW for the invitation, financial support and warm hospitality during his pleasant stay.  Finally, both authors would like to thank the anonymous referee for his/her careful reading of the manuscript and many helpful comments.

\bibliography{biblio}
\bibliographystyle{amsxport}

\vspace*{.5cm}

\noindent\begin{tabular}{p{8cm}p{8cm}}
School of Mathematics and Systems Science & School of Mathematics and Statistics \\
Beihang University & University of New South Wales \\
Beijing 100191 China & Sydney NSW 2052 Austrlia \\
Email: {\tt penggao@buaa.edu.cn} & Email: {\tt l.zhao@unsw.edu.au} \\
\end{tabular}

\end{document}